\newtheorem*{rep@theorem}{\rep@title}
\newcommand{\newreptheorem}[2]{%
\newenvironment{rep#1}[1]{%
 \def\rep@title{#2 \ref{##1}}%
 \begin{rep@theorem}}%
 {\end{rep@theorem}}}
\newtheorem{theorem}{Theorem}[section]
\theoremstyle{definition}
\newtheorem{rmk}[theorem]{Remark}
\DeclareMathOperator{\rkc}{rank_C}
\DeclareMathOperator{\Vol}{Vol}
\DeclareMathOperator{\dist}{dist}
\DeclareMathOperator{\stab}{Stab}
\newcommand\Hy{\mathbb{H}}
\newcommand\hP{{\mathrm P}}
\newcommand\isom{\operatorname{Isom}}
\DeclareMathOperator{\GL}{GL}
\DeclareMathOperator{\Ort}{O}
\DeclareMathOperator{\PO}{PO}
\def\isom{\operatorname{Isom}(\mathbb{H}^n)}
\def\Hy{\mathbb{H}}
\def\Eucln1{\mathbb{E}^{\,n,1}}
\def\deg{\mathrm{deg}}
\def\rank{\mathrm{rank}}
\def\e_HW{\epsilon_{HW}}
\def\cO{{\mathfrak o}_k}
\newcommand\Q{\mathbb{Q}}
\newcommand\R{\mathbb{R}}
\begin{document}

\title[Effective bounds for Vinberg's algorithm]{Effective bounds for Vinberg's algorithm for arithmetic hyperbolic lattices}

\author{M. Belolipetsky}
\thanks{Belolipetsky is partially supported by CNPq and FAPERJ}
\address{IMPA, Estrada Dona Castorina, 110, 22460-320 Rio de Janeiro, Brazil}
\email{mbel@impa.br}
\author{M. Kapovich}
\address{Department of Mathematics, UC Davis, One Shields Avenue, Davis CA 95616, USA}
\email{kapovich@math.ucdavis.edu}


\begin{abstract}
A group of isometries of a hyperbolic $n$-space is called a reflection group if it is  generated by reflections in hyperbolic hyperplanes.
Vinberg gave a semi-algorithm for finding a maximal reflection sublattice in a given arithmetic subgroup of $\Ort(n,1)$ of the simplest type. 
We provide an effective termination condition for Vinberg's semi-algorithm with which it becomes an algorithm for finding maximal reflection sublattices. 
The main new ingredient of the proof is an upper bound for the number of faces of an arithmetic hyperbolic Coxeter polyhedron in terms of its volume. 
\end{abstract}

\maketitle

\section{Introduction}

The main goal of this paper is to supplement Vinberg's semi-algorithm with an effective termination bound. Together with this new bound it becomes an algorithm that determines if a given arithmetic hyperbolic lattice contains a maximal arithmetic reflection subgroup. At the same time, our main technical result (Theorem~\ref{thm1}) is non-algorithmic, it provides a {\em linear} upper bound on the number of Coxeter generators (denoted $\rkc(\Gamma)$) of a given arithmetic reflection lattice $\Gamma< \PO(n,1)$, in terms of the covolume of $\Gamma$ and the field of definition $k$ of the lattice $\Gamma$. More precisely, for each $n$, we obtain an explicit constant $C(n,\deg(k))$ such that if $\hP$ is a convex polyhedron in $\Hy^n$ of finite volume $V$, whose angles are submultiples of $\pi$, and the group $\Gamma$ generated by reflections in the facets of $\hP$ is arithmetic, defined over a number field $k$ of degree $\deg(k)$, then the number of facets of $\hP$ is $\le C(n, \deg(k)) \Vol(\hP)$.  

In the previous works the proof of non-reflectivity of some lattices was deduced from an existence of an infinite order symmetry of the fundamental polyhedron of the reflection subgroup (see e.g. \cite{Bug84}). Making Vinberg's algorithm to run sufficiently long allows one to detect enough vertices of the Coxeter diagram that are automorphism-equivalent and thus to prove that the polyhedron has infinite group of symmetries and is therefore of infinite volume. This method often works well in practice but there is no upper bound for the number of iterations required to detect the infinite symmetry. 

A similar result to our Theorem~\ref{thm1} was proved by Gelander \cite{Gelander},  who obtained a uniform linear upper bound $C_Gv$ on the minimal number of generators for lattices $\Gamma$ in a semisimple Lie group $G$ in terms of the covolume $v$ of $\Gamma$. However, his proof applied to a hyperbolic reflection group would not necessarily produce generators that are reflections. Note that the minimal number of generators of a Coxeter group $\Gamma$ (denoted $\rank(\Gamma)$) can be much smaller than $\rkc(\Gamma)$. The basic example is given by the symmetric groups $S_n$, for which we have $\rank(\Gamma) = 2 < n-1 = \rkc(\Gamma)$ if $n>3$. 

Note also that our bound depends on arithmetic invariants of reflective lattices. It can be made uniform (but nonlinear) as in Remark~\ref{rem1}, but even assuming the Lehmer conjecture would not suffice for our method to give a uniform linear upper bound. On the other hand, our Theorem~\ref{thm2} shows that we always have a uniform linear \emph{lower bound} for $\rkc(\Gamma)$ of a hyperbolic reflection lattice $\Gamma$ in terms of its covolume. There is no such bound for $\rank(\Gamma)$ as there are examples of lattices in $\Hy^n$ for $3\le n\le 8$ with a fixed number of generators and arbitrarily large covolume: In dimension 3 this is a consequence of existence of hyperbolic manifolds fibering over the circle, while in dimensions up to 8, this is a consequence of existence of hyperbolic manifolds of finite volume that {\em fiber algebraically}, see the recent paper~\cite{IMM}. 

\medskip
{\bf Acknowledgements.} This paper grew out of the discussion the authors had during the AIM workshop in December of 2020 on reflection groups. The authors are thankful to AIM for holding the workshop. We are also grateful to Alexander Kolpakov who corrected one of our miscalculations, and to Daniel Allcock and Nikolay Bogachev for numerous useful remarks. 

\section{Preliminaries}

In this section we briefly review the definitions and some properties of arithmetic groups of hyperbolic isometries and arithmetic hyperbolic reflection groups. We refer to a survey \cite{Bel16} for more details and references.

Let $\Eucln1$ be an $(n+1)$-dimensional vector space with the inner product defined by a quadratic form $f$ of signature $(n, 1)$. We have 
\[\{v \in \Eucln1 | (v, v) < 0\} = \mathfrak{C} \cup (-\mathfrak{C}),\]
where $\mathfrak{C}$ is an open convex cone. In the \emph{vector model}, the hyperbolic space $\Hy^n$ is identified with the set of rays through the origin in $\mathfrak{C}$, or $\mathfrak{C}/\mathbb{R}^+$, so that the group of isometries $\isom$ is given by the orthogonal transformations of $\Eucln1$ preserving $\mathfrak{C} $. 

Cofinite discrete subgroups of $\isom$ can be constructed using number theory. 
Let $k$ be a totally real number field with the ring of integers $\cO$. Consider a quadratic form $f$ of signature $(n,1)$ defined over $k$ such that for every non-identity embedding $\sigma : k\to\R$ the form $f^\sigma$ is positive definite. Let $\Gamma = \Ort_0(f,\cO)$ be the subgroup of the integral automorphisms of $f$ in  $H = \Ort_0(n,1)$, which is the full group of isometries of the hyperbolic $n$-space $\Hy^n$ (the group $\Ort_0(n,1)$ is the subgroup of the orthogonal group $\Ort(n,1)$ that preserves the cone $\mathfrak{C}$). By a classical theorem of Borel and Harish--Chandra, the group $\Gamma$ is a \emph{lattice} in $H$, i.e. it is discrete and has finite covolume.  Lattices obtained in this way and subgroups of $H$ which are commensurable with them are called \emph{arithmetic lattices of the simplest type}. The field $k$ is called the \emph{field of definition} of $\Gamma$ (and subgroups commensurable with it). The same terminology applies to the quotient orbifolds $\Hy^n/\Gamma$.

There are compact and finite volume non-compact arithmetic orbifolds. By the Godement's compactness criterion, an arithmetic group $\Gamma$ is non-cocompact if and only if $k = \Q$ and the associated quadratic form $f$ is isotropic. The Hasse--Minkowski theorem implies that for $k = \Q$ and $n \ge 4$ the latter condition holds automatically. Therefore, for $n \ge 4$ the quotient $\Hy^n/\Gamma$ is non-compact if and only if the arithmetic group $\Gamma$ of the simplest type is defined over the rationals. For $n = 2$ and $3$ the non-cocompact subgroups are defined over $\Q$ but there also exist cocompact arithmetic subgroups with the same field of definition.

We will mainly be interested in \emph{hyperbolic reflection groups}. These are the lattices in $\isom$ generated by finite sets of hyperbolic reflections in hyperplanes. Vinberg proved that arithmetic hyperbolic reflection groups are always of the simplest type~\cite{Vin67}. A convex polyhedron $\hP\subset \Hy^n$ which is a fundamental domain of 
a reflection group generated by reflections in facets of $\hP$, is called a \emph{Coxeter polyhedron}; such polyhedra 
are described by Coxeter diagrams. These diagrams are the graphs with vertices corresponding to the facets of $\hP$, and two different vertices $e_i$, $e_j$ are connected by a thin edge of integer weight $m_{ij} \ge 3$ or by $m_{ij}-2$ edges if the corresponding faces intersect with the dihedral angle $\frac{\pi}{m_{ij}}$, by a thick edge if they intersect at infinity (dihedral angle zero), and by a dashed edge if they are divergent. In particular, two vertices are not joined by an edge if and only if the corresponding faces of $\hP$ are orthogonal.

A discrete subgroup $\Gamma_0$ of a Lie group $H$ is called \emph{maximal} if it is not properly contained in any other discrete subgroup $\Gamma_1 < H$. It is well known that in a semisimple Lie group any lattice is contained in some maximal lattice. A hyperbolic reflection lattice $\Gamma< H=\isom$ is called a \emph{maximal hyperbolic reflection group} if $\Gamma$ is not properly contained in any other lattice in $H$ generated by reflections. Vinberg proved that the normalizer in $\isom$ of a maximal hyperbolic reflection group is a maximal lattice \cite{Vin67}. This fact is used in \cite{ABSW08} together with the other results to give an upper bound for the covolume of the maximal arithmetic reflection groups: 

\begin{theorem}\label{thm ABSW}
For each dimension  $n\ge 2$, there exists a computable constant $C(n)$ depending only on $n$ such that if  $\Gamma$ is 
a maximal arithmetic reflection subgroup of $\isom$, then $\Vol(\Hy^n/\Gamma) \le C(n)$.
\end{theorem}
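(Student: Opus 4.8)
The plan is to read the covolume as the hyperbolic volume $\Vol(\hP)$ of the Coxeter fundamental polyhedron $\hP$ of $\Gamma$, and to bound it by controlling the arithmetic invariants of the field of definition $k$ and the defining quadratic form $f$. Prasad's volume formula expresses $\Vol(\Hy^n/\Gamma)$ as an explicit function of this arithmetic data — an archimedean factor depending on $n$, a power of the discriminant $|\disc(k)|$, a product of special $L$-values, and local correction factors at the ramified places — and in particular it is monotone increasing in $|\disc(k)|$ and in $\deg(k)$. Consequently it suffices to bound three quantities in terms of $n$ alone: the degree $d=[k:\Q]$, the discriminant $|\disc(k)|$ (equivalently, to show that only finitely many fields $k$ arise), and the number of $\Ort(f,\cO)$-equivalence classes of admissible forms over each such field. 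The maximum of the resulting finite list of covolumes then furnishes the computable constant $C(n)$.

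The first and principal step is to bound the degree $d=[k:\Q]$ for fixed $n$. The idea is that a finite-volume Coxeter polyhedron necessarily exhibits a ``short'' feature — a pair of adjacent facets meeting at a dihedral angle $\pi/m$, or a short closed geodesic in $\Hy^n/\Gamma$ — to which one attaches a totally real algebraic integer $\alpha\in k$, for instance $\alpha=4\cos^2(\pi/m)$ or $\alpha=2\cosh\ell$ for a geodesic of length $\ell$. Because every non-identity Galois conjugate $f^\sigma$ of the defining form is positive definite, the conjugates $\alpha^\sigma$ are all positive and bounded above by an explicit constant depending only on $n$; since the rational integer $\prod_\sigma \alpha^\sigma$ is at least $1$, the Mahler measure of $\alpha$ is bounded, and for each $n$ this forces $d\le d(n)$. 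I expect this to be the main obstacle: the bound it yields is non-uniform in $n$, and — as noted in the introduction — even the Lehmer conjecture would not upgrade it to a uniform linear estimate, so the geometry of the short feature (the existence of a genuinely short root, not merely a short geodesic) must be exploited carefully.

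With the degree bounded, it remains to bound the discriminant and the number of forms. Here the reflectivity of $\Gamma$ is decisive. The existence of a finite-volume fundamental polyhedron whose facets are mirrors of roots of $f$ is a very restrictive condition on the form: Nikulin-type arguments show that a reflective form cannot be too heavily ramified, which, together with the monotonicity of the volume formula in $|\disc(k)|$, bounds the discriminant in terms of $n$ and $d$. This leaves finitely many admissible fields $k$, and over each of them only finitely many equivalence classes of reflective forms of signature $(n,1)$. Each such class determines, by Vinberg's uniqueness of the maximal reflection subgroup within a commensurability class, a single maximal arithmetic reflection group of definite covolume. Taking the maximum over this finite collection yields $C(n)$ and completes the proof.
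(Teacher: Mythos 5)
Your proposal has a genuine gap at its principal step, and it also inverts the logic of the proof the paper relies on. The paper does not reprove this theorem: it quotes it from \cite{ABSW08}, where the argument runs in the direction opposite to yours — one first bounds the covolume directly, and only afterwards deduces finiteness of the arithmetic data. Concretely, in \cite{ABSW08} the normalizer of a maximal reflection group $\Gamma$ is a maximal, hence congruence, arithmetic lattice (using \cite{Vin67}); congruence arithmetic lattices enjoy a uniform spectral gap $\lambda_1\ge\varepsilon(n)$; and a geometric lemma exploiting convexity of the Coxeter polyhedron $\hP$ (a test function built from the distance to a point fixed by the symmetry group of $\hP$, which descends to the quotient orbifold and has small Rayleigh quotient when the volume is large) shows that large covolume forces $\lambda_1$ to be small. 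This gives $\Vol(\Hy^n/\Gamma)\le C_1(n,\deg(k))$, and only then does the Borel--Prasad volume formula (covolume grows rapidly with $\deg(k)$ and $\disc(k)$) enter, to bound $\deg(k)$ in terms of $n$. Your plan — establish finiteness of the fields and forms first, then take the maximum of the covolumes — is essentially Nikulin's program, a much harder route, and your sketch of it does not go through.

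The fatal step is the degree bound. From ``all Galois conjugates of $\alpha$ are positive and bounded by a constant depending only on $n$, and the norm is at least $1$'' you conclude that the Mahler measure of $\alpha$ is bounded and hence that $d=[k:\Q]$ is bounded. But bounded Mahler measure does not bound the degree: the totally positive algebraic integers $4\cos^2(\pi/m)$ have all conjugates in $[0,4]$ and degree $\varphi(m)/2\to\infty$; likewise, by Salem's theorem there are Salem numbers of bounded Mahler measure and arbitrarily large degree. (This is why \cite{Dobr79} is used in this paper in the opposite direction, in Theorem~\ref{thm1}: bounded degree implies displacement bounded \emph{below}, not the converse.) What is true is Fekete's theorem — an interval of length strictly less than $4$ contains only finitely many complete conjugate sets of algebraic integers — but a geometric constant depending only on $n$ will in general exceed $4$, and producing a feature whose conjugates are confined to an interval of length $<4$ is exactly the delicate core of Nikulin's method. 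The second gap: ``Nikulin-type arguments show that a reflective form cannot be too heavily ramified'' simply asserts the hard theorem. Over a fixed field there are infinitely many inequivalence classes of forms of signature $(n,1)$, and finiteness of the \emph{reflective} ones among them is not a formal consequence of monotonicity in Prasad's formula; it is essentially the statement being proved. Finally, a smaller inaccuracy: Vinberg's uniqueness statement concerns the maximal reflection subgroup of a \emph{given} lattice, and a single commensurability class may contain many distinct maximal reflection groups.
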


The proof of this theorem follows from \cite[Section 6]{ABSW08}. In fact, a better bound $C_1 = C_1(n,k)$ can be obtained from the same argument as a function of the dimension $n$ and the degree of the field of definition $k$. It is then shown in \cite{ABSW08} that the degree of the field of definition is uniformly bounded in terms of $n$. Volumes of arithmetic hyperbolic orbifolds are discrete by the results Wang and Borel, hence Theorem~\ref{thm ABSW} implies finiteness of  maximal arithmetic reflection groups in dimension $n$.

\section{Bounds for the number of Coxeter generators}

Given a Coxeter group $\Gamma$,  we denote by $\rkc(\Gamma)$ the minimal number of its {\em Coxeter generators}, i.e. generators in a Coxeter system defining $\Gamma$ (such a Coxeter system need not be unique). In the case of interest, our group $\Gamma$ will be a reflection subgroup of $\isom$. The number of Coxeter generators, $\rkc(\Gamma)$,  is equal to the number of facets of a Coxeter polyhedron $\hP\subset \Hy^n$ of $\Gamma$. 

\begin{theorem}\label{thm1}
Let $\Gamma$ be an arithmetic hyperbolic reflection group defined over a field $k$. Then we have 
$$\rkc(\Gamma) \le C\Vol(\Hy^n/\Gamma),$$
where the constant $C = C(n,k)>0$ is computable and depends only on the dimension and the degree of the field of definition.
\end{theorem}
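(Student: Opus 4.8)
The plan is to bound the number $N$ of facets of a Coxeter fundamental polyhedron $\hP$ for $\Gamma$, since $\rkc(\Gamma)=N$. I will separate an \emph{arithmetic} input, which forces the dihedral angles of $\hP$ to be bounded away from $0$, from a purely \emph{geometric} packing estimate that converts this angle bound into a linear volume bound. The case $n=2$ is special and uses neither arithmeticity nor anything deep: by Gauss--Bonnet an $N$-gon with interior angles $\pi/m_i\le\pi/2$ has area at least $(N-2)\pi-\tfrac{N\pi}{2}=\tfrac{\pi}{2}N-2\pi$, so $N\le\tfrac{2}{\pi}\Vol(\hP)+4$. Assume $n\ge 3$ henceforth.

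The first step is to bound the dihedral angles of $\hP$ from below. Writing the angle along the ridge $F_i\cap F_j$ as $\pi/m_{ij}$, Vinberg's arithmeticity criterion forces the Gram entries $(e_i,e_j)=-\cos(\pi/m_{ij})$ to lie in the totally real field of definition $k$. Hence $\cos(\pi/m_{ij})$ generates a subfield of $k$, so $[\Q(\cos(\pi/m_{ij})):\Q]=\varphi(2m_{ij})/2\le \deg(k)$. Since $\varphi(2m)\to\infty$, this yields an explicit computable bound $m_{ij}\le M(\deg(k))$, and all finite dihedral angles lie in $[\theta_0,\pi/2]$ with $\theta_0:=\pi/M(\deg(k))>0$. (Non-adjacent facets either meet at infinity or diverge and play no role below.) This is the only place where $\deg(k)$ enters, which is why the final constant depends on $\deg(k)$ rather than on finer invariants.

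The second step is a disjointness-and-volume argument. For $0<\rho<1$ to be fixed, set
$$U_i=\{x\in\hP:\ \dist(x,F_i)<\rho\ \text{and}\ \dist(x,F_i)<\dist(x,F_j)\ \text{for all }j\ne i\}.$$
The sets $U_i$ are pairwise disjoint and contained in $\hP$, so it suffices to prove a uniform lower bound $\Vol(U_i)\ge v_0=v_0(n,\theta_0)>0$; summing then gives $N\le\Vol(\hP)/v_0$. To produce $v_0$ I will use two facts. First, each facet $F_i$ is itself a finite-volume Coxeter polyhedron in $\Hy^{n-1}$ (faces of Coxeter polyhedra are again Coxeter polyhedra), hence a fundamental domain of a lattice in $\Ort(n-1,1)$; by the Kazhdan--Margulis theorem any such orbifold contains a point of injectivity radius at least the Margulis constant $\mu(n-1)$, so $F_i$ contains a point $y_i$ with $\dist(y_i,\partial F_i)\ge\mu(n-1)$ and hence an embedded $(n-1)$-ball $B_i\subset F_i$ of definite radius. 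Second, the angle bound $\theta_0$ guarantees that over the deep part of $B_i$ the normal collar of depth $\rho$ stays closer to $F_i$ than to any other facet: near a ridge the bisecting wall meets $F_i$ at angle $\ge\theta_0/2$, so a point at normal height $t$ remains in $U_i$ as long as $t\lesssim\tan(\theta_0/2)\,\dist(\cdot,\partial F_i)$. Choosing $\rho=\rho(n,\theta_0)$ small enough, $U_i$ contains the full-depth-$\rho$ collar over a definite-size sub-ball of $B_i$, whose volume equals $\int_0^\rho\cosh^{n-1}(t)\,dt$ times a definite $(n-1)$-area, giving the required $v_0$.

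The main obstacle is exactly this lower bound $\Vol(U_i)\ge v_0$, and it is here that both hypotheses are used. The angle bound $\theta_0>0$ is essential: without it one could fan out arbitrarily many facets meeting along ridges at tiny angles inside a bounded volume (a ``book with many pages''), so the collars $U_i$ would degenerate and no linear bound could hold — this is precisely what arithmeticity rules out. The second delicate point is ensuring that each facet carries a definite \emph{thick} region rather than being uniformly thin, which is where I invoke that faces of Coxeter polyhedra are again finite-volume Coxeter polyhedra together with the Margulis constant in dimension $n-1$. The remaining work — verifying disjointness, the collar Jacobian $\cosh^{n-1}(t)$, and tracking how $\rho$, $\mu(n-1)$ and $\theta_0$ combine into a single computable $v_0=v_0(n,\deg(k))$ — is routine, and assembling it yields $\rkc(\Gamma)=N\le C(n,\deg(k))\,\Vol(\Hy^n/\Gamma)$.
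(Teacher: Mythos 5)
Your first step is essentially sound: by Vinberg's arithmeticity criterion the cyclic products of Gram matrix entries lie in the field of definition, so $4\cos^2(\pi/m_{ij})\in k$, hence $\varphi(m_{ij})/2=[\Q(\cos(2\pi/m_{ij})):\Q]\le \deg(k)$ and $m_{ij}$ is bounded in terms of $\deg(k)$. (The individual entries $(e_i,e_j)$ need not lie in $k$ --- only cyclic products do --- but this does not affect your conclusion.) The gap is in the second step. The lemma you invoke, that faces of Coxeter polyhedra are again Coxeter polyhedra, is false. If three hyperplanes in $\Hy^3$ meet at a point with dihedral angles $\alpha_{12},\alpha_{13},\alpha_{23}$, the angle inside the facet $F_1$ between the ridges $F_1\cap F_2$ and $F_1\cap F_3$ is
\[
\theta \;=\; \arccos\left( \frac{\cos\alpha_{23}+\cos\alpha_{12}\cos\alpha_{13}}{\sin\alpha_{12}\,\sin\alpha_{13}} \right),
\]
and for the Coxeter data $\alpha_{12}=\alpha_{13}=\pi/3$, $\alpha_{23}=\pi/2$ this gives $\theta=\arccos(1/3)$, not a submultiple of $\pi$. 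So a facet does not in general generate a discrete reflection lattice in $\Ort(n-1,1)$, and the Kazhdan--Margulis step that was supposed to produce the thick ball $B_i\subset F_i$ has nothing to apply to.

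Worse, the estimate this step was meant to establish --- $\Vol(U_i)\ge v_0(n,\theta_0)$ with $v_0$ depending only on $n$ and the angle bound --- is itself false, so no repair of that step can rescue the argument. Right-angled polyhedra satisfy your hypothesis with $\theta_0=\pi/2$, yet they can have long thin sliver facets (a right-angled hexagon with two alternating sides short has area $\pi$ but arbitrarily small inradius), over which the collar $U_i$ has arbitrarily small volume. In addition, facets that do not intersect are not negligible, contrary to your parenthetical remark: diverging facets with a short common perpendicular, or facets asymptotic at an ideal point, come arbitrarily close to each other and pinch $U_i$ through the bisector condition, and no lower bound on dihedral angles can prevent this. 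The dangerous configurations are therefore not small angles but short distances, i.e.\ hyperbolic elements of small displacement (the product of reflections in two close disjoint facets is a transvection of translation length twice their distance), and bounding these from below is a Lehmer-type arithmetic statement that your angle bound cannot deliver. This is exactly the route the paper takes, but applied to vertices rather than facets: it shows any two vertices $u,v$ of $\hP$ are uniformly separated --- via the Margulis lemma when $\langle \stab_\Gamma(u),\stab_\Gamma(v)\rangle$ is non-elementary, and via the Salem-number displacement bound of Emery--Ratcliffe--Tschantz together with Dobrowolski's theorem when it is virtually cyclic (this is where $\deg(k)$ enters) --- then bounds the number of finite vertices by disjoint balls (using the bound on orders of finite subgroups from Agol--Belolipetsky--Storm--Whyte) and of ideal vertices by Hersonsky's disjoint canonical cusp neighborhoods, and finally converts the vertex count into a facet count by incidence combinatorics, since each facet contains at least $n$ vertices while each vertex lies on at most $\max(n+1,2(n-1))$ facets.
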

\begin{proof}
Let $u,v\in \Hy^n\cup \mathbb{S}^{n-1}$ be two distinct vertices of a Coxeter polyhedron $\hP$ of $\Gamma$. Consider the subgroup of $\Gamma$ generated by their stabilizers
$$ \Delta = \langle\stab_\Gamma(u), \stab_\Gamma(v)\rangle < \Gamma.$$ 
There are two possible cases:
\begin{itemize}
    \item[(1)] The group $\Delta$ is not virtually abelian. Then by the Margulis lemma we have 
    $$\dist(u,v) \ge \mu_n,$$ 
    where $\mu_n$ is the Margulis constant for $\Hy^n$.   
    \item[(2)] The group $\Delta$ is virtually abelian, i.e. is virtually infinite cyclic. 
       In this case every facet of $\hP$ passing though $u$ or $v$ is either orthogonal to $uv$ or contains $uv$. It follows that there is exactly one facet $F_u$ (resp. $F_v$) of $\hP$ passing though $u$ (resp. $v$) and orthogonal to $uv$. The group $\Delta$ then splits as a direct product of the infinite dihedral group generated by reflections in $F_u, F_v$ and the finite reflection group consisting of the elements of $\Gamma$ that pointwise fix $uv$. The product of reflections in $F_u$ and $F_v$ is a transvection (i.e. a hyperbolic element of $\Gamma$) along $uv$ with the displacement $2\dist(u,v)$. 
         
 There is a well known relation between the displacements of hyperbolic elements in arithmetic hyperbolic lattices and the Mahler measures of Salem numbers. We refer to a recent paper \cite{ERT19} for the details. In our case, the lattice $\Gamma$ is of the simplest type (by \cite{Vin67}), hence \cite[Theorem~5.2]{ERT19} together with Dobrowolski's lower bound for the Mahler measure of Salem numbers \cite{Dobr79} imply that 
    $$\dist(u,v) \ge c_k,$$ 
    where the constant $c_k > 0$ depends only on the degree of the field $k$. It is conjectured that in fact the constant does not depend on $k$. This is known as a special case of  {\em Lehmer's problem} and {\em Margulis' conjecture}. 
\end{itemize}
Therefore, in both cases we obtain 
    $$\dist(u,v) \ge d,$$
where $d=d(n,k)$ is a positive constant which depends only on the dimension $n$ and the degree of the field of definition $k$.

For a proper vertex $u$ of $\hP$ (i.e. a vertex which is in $\Hy^n$) 
we consider its neighborhood $B_u$ which is the intersection of $\hP$ with the open ball $B(u, d/2)\subset \Hy^n$. These neighborhoods are pairwise disjoint. The volume of $B_u$ is bounded below by $v_{n}(d/2)/f$, where $v_n(r)$ is the volume of a ball of radius $r$ in $\Hy^n$ and $f$ is an upper bound for the orders of finite subgroups of $\Gamma$. By \cite[Corollary~5.5]{ABSW08}, the orders of finite subgroups are bounded above by $f_{n,k} = m_n\deg(k)^{n(n+1)}$. Therefore, the number $V_1$ of the proper vertices of $\hP$ satisfies the upper bound
\begin{equation}\label{bound1}
V_1 \le \frac{\Vol(\Hy^n/\Gamma)}{v_{n}(d/2)/f_{n,k}}.
\end{equation}

For each ideal vertex $u$ of $\hP$ we consider a {\em canonical cusp neighborhood} $C_u\subset \hP$ introduced by Hersonsky in \cite{H93}. In our setting, it is defined as follows. For each horosphere $H\subset \Hy^n$ centered at $u$, its stabilizer $\stab_\Gamma(u)$ in $\Gamma$ acts on $H$ as a lattice; let 
$A_u< \stab_\Gamma(u)$ denote the (unique) maximal free abelian subgroup of rank $n-1$. We equip $H$ with the flat Riemannian metric induced from $\Hy^n$. There exists a unique horosphere $H_u$ as above such that the shortest nontrivial geodesic in $H_u/A_u$ 
has unit length. This horosphere $H_u$ bounds a horoball $B_u\subset \Hy^n$. The intersection $B_u\cap \hP$ is 
the canonical cusp neighborhood $C_u$ of $u$ in $\hP$. By \cite[Theorem~2.3]{H93}, the canonical cusp neighborhoods are pairwise disjoint. 

\begin{rmk}
While Hersonsky in \cite{H93} proves the disjointness only for discrete {\em orientation-preserving} groups of isometries 
of $\Hy^n$, passing to index 2 orientation-preserving subgroups yields the general result, since this passage does not alter the maximal free abelian subgroups $A_u$. 
\end{rmk}

By \cite[Proposition 3.4]{H93} (adapted to our setting of reflection groups), 
the volumes of $C_u$'s are bounded from below by \mbox{$\delta_{n-1}/((n-1)I_{n-1})$}, where $\delta_{n-1}$ and $I_{n-1}$ depend only on $n$. Here, $I_{n-1}$ is the least upper bound on the index of $A_u$ in $\stab_\Gamma(u)$, which equals the maximal order of a finite Coxeter group of rank $k=n-1$, of crystallographic type.

\begin{rmk}
For $k\notin \{2, 4, 6, 7, 8\}$, the maximal order is $2^k k!$, the order of the finite irreducible Coxeter group $B_k$. For the remaining ranks $k$, the maximal order is given by the order of the unique exceptional irreducible Coxeter group of rank $k$ of crystallographic type (i.e. $G_2, F_4, E_6, E_7$ and $E_8$ respectively). 
\end{rmk}

The constant $\delta_{n-1}$ is the largest lower bound on the volume of the flat torus $H_u/A_u$, where 
the volume is computed with respect to the flat metric on $H_u$ induced from $\Hy^n$. A crude lower bound for 
$\delta_{k}$ is the volume of the unit ball in the Euclidean space $\R^{k}$, i.e. 
$$
\frac{2\pi^{k/2}}{\Gamma(\frac{k}{2}+1)}. 
$$ 
Therefore, the number $V_2$ of the ideal vertices of $\hP$ satisfies
\begin{equation}\label{bound2}
V_2 \le \frac{\Vol(\Hy^n/\Gamma)}{\delta_{n-1} / ((n-1)I_{n-1})}.
\end{equation}

From \eqref{bound1} and \eqref{bound2} we conclude that the total number of vertices of $\hP$ satisfies
$$V \le \frac{\Vol(\Hy^n/\Gamma)}{c_{n,k}},$$
where the constant $c_{n,k}$ depends only on the dimension $n$ and the degree of the field of definition $k$.

Each finite vertex of an $n$-dimensional Coxeter polyhedron $\hP$ of finite volume is incident to  $n$ facets.  For ideal vertices of a Coxeter polyhedron the situation is a bit more complex. The link of this vertex in $\hP$ is the product of $k$ simplices of dimensions 
$n_i, i=1,...,k$, $n_1+...+n_k\le n-1$. The number of facets in such a product is at most 
$$
\sum_{i=1}^k (n_i+1)=  k+ \sum_{i=1}^k n_i\le 2(n-1),
$$
with maximum realized by links which are $(n-1)$-dimensional cubes. 
Since each facet of $\hP$ contains at least $n$ vertices, we have the following bound on the number of facets of $\hP$: 
\begin{equation}
F = \rkc(\Gamma) \le \frac{2(n-1)}{n} V.
\end{equation}
\end{proof}

\begin{rmk} \label{rem1}
More refined results about the relationship between the field of definition and covolume of an arithmetic lattice allow one to prove uniform bounds of the form 
\begin{equation}
\rkc(\Gamma) \le C\Vol(\Hy^n/\Gamma)^{1+\varepsilon}
\end{equation}
for any positive $\varepsilon$, where the constant $C=C_{n,\varepsilon}$ depends only on the dimension and~$\varepsilon$. To this end one can combine the preceding argument with the bound for displacement and volume of balls in terms of covolume (see \cite[Lemmas~3.1, 3.2]{Bel20}). The proofs of these bounds are nontrivial as they require some deep results from arithmetic groups theory and number theory. With the bounds at hand the remaining details are straightforward.   
\end{rmk}

In contrast with the minimal number of generators of a lattice, for Coxeter groups we have as well a lower bound for the rank in terms of volume. Although this bound is not required for the termination condition of Vinberg's algorithm, it may have an independent interest.

\begin{theorem}\label{thm2}
Let $\Gamma$ be a hyperbolic reflection lattice. Then we have
$$\rkc(\Gamma) \ge c \Vol(\Hy^n/\Gamma),$$
where the constant $c >0$ depends only on the dimension of the hyperbolic space. 
\end{theorem}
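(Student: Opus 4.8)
The statement is equivalent to the purely geometric assertion that $\Vol(\Hy^n/\Gamma)=\Vol(\hP)\le C(n)F$, where $F=\rkc(\Gamma)$ is the number of facets of a Coxeter fundamental polyhedron $\hP$. In contrast with Theorem~\ref{thm1}, no arithmeticity is available, so the entire argument must be metric and combinatorial with a constant depending only on $n$. The plan is to prove $\Vol(\hP)\le C(n)F$ by induction on $n$. The base case $n=2$ is Gauss--Bonnet: a finite-area Coxeter polygon with $F$ edges and angles $\pi/m_i$ has area $(F-2)\pi-\sum_i \pi/m_i\le \pi F$, so $c(2)=1/\pi$ works. This already exhibits the essential mechanism, that the area is controlled by combinatorial data rather than by the (possibly very large) edge lengths; note in particular that the cruder linear isoperimetric bound (area $\le$ perimeter) would be useless here.

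For the inductive step I would pass from the volume of $\hP$ to that of its boundary via the sharp linear (Cheeger) isoperimetric inequality in $\Hy^n$, which gives
\[
\Vol_n(\hP)\le \frac{1}{n-1}\,\Vol_{n-1}(\partial\hP)=\frac{1}{n-1}\sum_{i=1}^{F}\Vol_{n-1}(F_i).
\]
Each facet $F_i$ is itself a finite-volume convex polyhedron in a copy of $\Hy^{n-1}$, and the induction hypothesis bounds $\Vol_{n-1}(F_i)$ linearly in the number of its own facets, i.e. in the number of facets of $\hP$ adjacent to $F_i$. Summing over $i$, the right-hand side is then controlled by the number of codimension-two faces (ridges) of $\hP$. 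In dimension $n=3$ this closes the argument immediately: the boundary is a $2$-sphere, so Euler's formula gives at most $3F-6$ ridges, whence $\Vol_3(\hP)\le \pi(3F-6)=O(F)$.

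The main obstacle is exactly the step that breaks down for a general convex polyhedron once $n\ge 4$: the number of ridges of $\hP$ need not be linear in the number of facets (simple polyhedra dual to neighbourly polytopes have a quadratic number of ridges), so the naive isoperimetric induction only yields $\Vol(\hP)=O(\#\,\text{ridges})$, which is too weak. This is where the Coxeter (acute-angle) hypothesis must be used, and it is the technical heart of the proof. I would exploit that all dihedral angles of $\hP$ are $\le \pi/2$, so that the outward unit normals $e_i,e_j\in\Eucln1$ of two facets meeting along a ridge satisfy $(e_i,e_j)=-\cos(\pi/m_{ij})\in(-1,0]$, together with the structure of vertex links used in Theorem~\ref{thm1}: the link of a finite vertex is a spherical Coxeter simplex and the link of an ideal vertex is a Euclidean Coxeter polyhedron, a product of simplices with at most $2(n-1)$ facets. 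These acuteness and link constraints limit how many facets a single facet can meet, and, propagated through the dimensions, should deliver the required linear bound on the total boundary volume $\sum_i\Vol_{n-1}(F_i)$; obtaining this combinatorial/packing estimate uniformly in $n$ is the part I expect to be genuinely hard. The classical fact that every hyperbolic $n$-simplex has volume at most the volume $v_n^{\max}$ of the regular ideal simplex provides the complementary input ensuring that pieces of bounded combinatorial complexity contribute only a bounded amount.
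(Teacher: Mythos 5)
Your proposal is incomplete, and the gap sits exactly where the actual content of the theorem lies. The isoperimetric induction is sound as a reduction (and for $n\le 3$ it is a complete, genuinely different proof: Gauss--Bonnet plus Euler's formula, with no Coxeter hypothesis needed in dimension $3$). But for $n\ge 4$ you have reduced the statement to the assertion that the number of ridges of $\hP$ is $O(F)$ --- and, once the induction is unwound through the dimensions (ridges of facets are codimension-$3$ faces of $\hP$, and so on), to the assertion that the number of faces of \emph{every} dimension is linear in $F$. You correctly observe that this fails for general simple polytopes (products of polygons already give quadratically many ridges), you correctly identify that the non-obtuse-angle condition and the link structure must be what saves the day, and then you stop, declaring this ``the part I expect to be genuinely hard.'' That unproven combinatorial lemma is not a technical remainder: it is the theorem. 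A second, repairable, issue: the facets of a Coxeter polyhedron are not Coxeter polyhedra (their dihedral angles need not be submultiples of $\pi$), so your inductive hypothesis has to be restated for non-obtuse-angled polyhedra, using the fact that faces of acute-angled polytopes are acute-angled.

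For comparison, the paper proves exactly the missing lemma, and by a different mechanism than the normal-vector/packing estimates you sketch. Realize $\hP$ in the Klein model as a convex polytope in $\R^n$, put it in general position, and sweep by the level hyperplanes $x_n=c$. The Coxeter hypothesis enters only through the vertex degrees established in the proof of Theorem~\ref{thm1}: a finite vertex lies on $n+1$ facets and an ideal vertex on at most $2(n-1)$ facets, so each vertex passage changes the combinatorics of the slice by a bounded amount, and convexity of the sweep yields $f(j)\le F+c_{n,j}$ for the number of $j$-dimensional faces. With this in hand the paper does not need the isoperimetric induction at all: the barycentric subdivision of $\hP$ consists of at most $c_nF$ geodesic simplices, each of volume at most the dimensional constant $\omega_n$ (your $v_n^{\max}$), so $\Vol(\Hy^n/\Gamma)=\Vol(\hP)\le \omega_n c_n F$ directly. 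So if you supplied the linear face bound, your induction would close and would constitute an alternative finish; but as written, the proposal assumes precisely what needs to be proved.
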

\begin{proof}
Let $\hP$ be the Coxeter polyhedron of $\Gamma$ with $F = \rkc(\Gamma)$ facets and $V$ vertices. By using the Klein model of the hyperbolic $n$-space we can realize $\hP$ as a convex polyhedron in $\R^n$ (this applies both to the compact polyhedra and to the ones with ideal vertices). Assume that $\hP \hookrightarrow\R^n$ is in general position so that all the vertices have different $n$-th coordinate and slice it by the hyperplanes $x_n = c$. Following the slicing down from the top vertex, by convexity and the bound for the number of faces of a Coxeter polyhedron that are incident to a vertex (see the proof of Theorem~\ref{thm1}), we deduce that the number $f(j)$ of the $j$-dimensional faces of $\hP$ satisfies
$$ f(j) \le c_{n,j} V \le c'_{n,j} F.$$
In particular, the total number of faces of $\hP$ is bounded linearly in terms of $F$. Hence the barycentric subdivision of $\hP$ consists of at most $c_nF$ simplices where the constant $c_n$ depends only on $n$. Convexity of a Coxeter polyhedron in $\Hy^n$ implies that the barycentric subdivision is a triangulation with geodesic simplices. The volume of a hyperbolic simplex is bounded above by a constant $\omega_n$ depending only on the dimension of the space. Therefore, we have 
$$\Vol(\Hy^n/\Gamma) = \Vol(\hP) \le \omega_n c_nF.$$ 
This gives the lower bound for the Coxeter rank with $\displaystyle c = \frac1{\omega_n c_n}$.
\end{proof}

\section{A termination condition for Vinberg's algorithm}

Let $f$ be an admissible quadratic form over a field $k$ with the ring of integers $\cO$, and let $\Gamma = \Ort_0(f,\cO)$ be the associated arithmetic subgroup. We would like to know when $\Gamma$ contains a maximal arithmetic hyperbolic reflection subgroup 
and, in particular, the form $f$ is \emph{maximal reflective}, 
as well as identifying such a subgroup.  The main practical tool for verifying reflectivity is \emph{Vinberg's algorithm} \cite[Section~3]{Vin72} which we shall briefly review now. We refer to \cite{Bel16} for the references regarding implementations and applications of the algorithm. More recent references are \cite{BP1, BP2, Bog} (a theoretical description of the algorithm, its Python implementation, and some applications),  \cite[\S 6.2.1]{G1}, \cite{G2}  (a general algorithm and its C++ implementation in special cases), 
\cite{Bot1, Bot2} (another general description of the algorithm and its Julia implementation for number fields whose rings of integers are PIDs, principal ideal domains), and \cite{A} (an alternative to Vinberg's algorithm). 

In the vector model of $\Hy^n$, a hyperplane is given by the set of rays in the cone $\mathfrak{C}$ which are orthogonal to a vector $e \in \Eucln1$ with $(e,e) > 0$. A hyperplane $\Pi_e$ defines two halfspaces, $\Pi_e^+$ and $\Pi_e^-$, where ``$\pm$'' is the sign of $(e,x)$ for $x$ in the corresponding halfspace, and a \emph{reflection}
\begin{equation*}
R_e: x \to x - 2\frac{(e,x)}{(e,e)}e,
\end{equation*}
where the Lorentzian inner product is given by $(u,v) = \frac{1}{2}(f(u+v)-f(u)-f(v))$.

We now describe briefly the algorithm, for details we refer to the references given above. 
For simplicity of the discussion, we shall assume that $\cO$ is a PID: this assumption ensures that the group $\Gamma$ is contained in the automorphism group of a free $\cO$-module of rank $n+1$, which simplifies the description. 
The vector $e$ corresponding to the reflection $R_e$ is defined up to scaling, so if $e$ has $k$-rational coordinates we can normalize it so that the coordinates are coprime integers in $\cO$. With this normalization we can assign to $R_e$ a parameter $s = (e,e)\in \cO$. The reflection $R_e$ belongs to the group $\Ort_0(f,\cO)$ if
$\frac{2}{s}(e,v_i) \in \cO, \text{ for the standard basis vectors } v_i$,  $i = 0,\ldots,n$.
This is called the \emph{crystallographic condition}. 

The algorithm starts with picking up a control vector $u_0\in {\mathfrak C}$ with integral coordinates which corresponds to a point $x_0\in {\Hy^n}$.  
Consider the reflection stabilizer subgroup $\Gamma_0$ in $\Gamma$ of the vector $u_0$. It is a (finite) group generated by $m \le n$ reflections in $\Gamma$ whose mirrors pass through $x_0$ ($m$ may be equal to $0$).

Let 
$$\hP_0 = \bigcap_{i=1}^m \Pi_{e_i}^-$$
be a fundamental chamber of $\Gamma_0$. All the halfspaces $\Pi_{e_i}^-$ are \emph{essential} (i.e. not containing the intersection of the other halfspaces). The corresponding vectors $e_i$ satisfy $(e_i, e_i) > 0$, $(e_i, u_0) = 0$ for all $i$, and the reflections $R_{e_i}$ generate $\Gamma_0$. There is a unique fundamental polyhedron $\hP$ of the maximal reflection subgroup of $\Gamma$ which sits inside $\hP_0$ and contains $x_0$.

The algorithm for finding $\hP$ proceeds inductively, by constructing a sequence of convex hyperbolic polyhedra $\hP_i$, $i>0$. The polyhedron $\hP_{i}$ is obtained by intersecting  $\hP_{i-1}$ with a halfspace $\Pi^-_{e_{m+i}}$. The vector  $e_{m+i}$ is chosen as follows: 
 
This is a normalized vector satisfying the crystallographic condition such that  
\begin{equation}\label{eq41}
(e_{m+i}, e_{m+i}) > 0, (e_{m+i}, u_0)<0, (e_{m+i}, e_j) \le 0
\end{equation}
for all $j < m+i$, i.e. 
$$\hP_{i} \subseteq \bigcap_{j=1}^{m+i} \Pi_{e_j}^-.$$
Moreover, we require that the distance between $x_0$ and $\Pi_{e_{m+i}}$ is the smallest possible, i.e. minimizing the value
\begin{equation}\label{eq42}
\sinh^2\left(\mathrm{dist}(x_0, \Pi_{e_{m+i}})\right) = -\frac{(e_{m+i},u_0)^2}{(e_{m+i},e_{m+i})(u_0,u_0)},
\end{equation}
subject to the rest of the conditions listed above. 
 
The minimality condition implies that all the hyperplanes $\Pi_{e_j}, j=1,...,m+i$ are essential, meaning that the intersection of any proper collection of half-spaces $\Pi^-_{e_j}$ is strictly larger than $\hP_{i}$. 

Note that if $k \neq \Q$, its integers do not form a discrete subset of $\R$. Nevertheless, in view of discreteness of the arithmetic group $\Gamma$ (as a subgroup of $\GL(n+1, \R)$), the distances $d(u_0, \Pi)$ (where $\Pi$ is the reflection hyperplane of a reflection in $\Gamma$) form a discrete subset $\mathcal{D} \subset \R$ (cf. \cite{Bug84}). 
Under some additional assumptions, such as $\cO$ being a PID, it is easy to find an explicit discrete subset of $\R$ containing $\mathcal{D}$. Now the algorithm runs through the discrete set of distances in an increasing order and for each of them finds all normalized vectors satisfying the crystallographic condition and inequalities \eqref{eq41}. Discreteness of $\Gamma$ implies that for each distance the number of such vectors is finite, moreover, formula \eqref{eq42} translates to a system of Diophantine equations that can be effectively solved (cf. \cite[Section 3.5]{BP1}). 

It may occur that the group $\Gamma$ does not contain any reflections. An example of such an arithmetic lattice defined over $k = \Q$ can be found in \cite[Section~4.2]{BK}. In this case the group $\Gamma_0$ is trivial and the set $\mathcal{D}$ is empty. Thus, the algorithm would not be able to start. In \cite{GS}, Grunewald and Segal gave a decision procedure to 
find integer solutions for quadrics $Q(x_1,....,x_{n+1})=0$ (with integer coefficients), subject to finitely many specified congruences and linear inequalities. It can be applied to give an algorithmic description of the set $\mathcal{D}$ which, in particular, tells, after a finite number of steps, if this set is empty or not. We expect that the method from \cite{GS} will generalize to solution of the systems of quadratic equations in algebraic integers but the details of this need to be carefully checked.

When the group $\Gamma$ contains a reflection it has infinitely many of them. The algorithm terminates if it yields a polyhedron $\hP = \bigcap_{i} \Pi_{e_i}^-$ of finite volume, in which case the form $f$ is reflective. Moreover, the subgroup $\Gamma_R$ generated by reflections in the faces of $\hP$ is a maximal reflection subgroup of $\Gamma$ by \cite[Propositions~4 and~5]{Vin72}.


The finiteness of volume of $\hP$ can be checked as follows. Each hyperbolic half-space $\Pi^-_{e_i}$ defining $\hP$ is the intersection of the linear half-space $\widetilde\Pi^-_{e_i}\subset {\mathbb E}^{n,1}$ with the upper sheet of the hyperboloid $\{v: (v, v)< 0\}\cap {\mathfrak C}$. Thus, one computes the extreme rays of the convex polyhedral cone 
$$
\bigcap_{i} \widetilde\Pi^-_{e_i} \subset {\mathbb E}^{n,1}. 
$$
Then $\hP$ has finite volume if and only if each extreme ray is contained in the closure of ${\mathfrak C}$, i.e. its generating vector $v_i$ satisfies the inequality $(v_i, v_i)\le 0$.

Now note that each iteration of the algorithm adds a new facet to $\hP_{i-1}$. Hence Theorems~\ref{thm ABSW} and \ref{thm1} give an upper bound on the number of iterations 
which depends only on the dimension $n$ and the degree of $k$. Moreover, in view of Remark~\ref{rem1}, there exists an upper bound which depends only on $n$. If more facets are produced, then $\Gamma = \Ort_0(f,\cO)$ does not contain a maximal arithmetic hyperbolic reflection group.

\end{document}